\definecolor{myred}{rgb}{0.2,0,0}
\definecolor{myblue}{rgb}{0,0,0.6}
\definecolor{mygreen}{rgb}{0,0.2,0}
\numberwithin{equation}{section}
\newcommand{\D}{\mathcal{D}}
\newcommand{\pd}{\mathcal{D}^*}
\renewcommand{\l}{\ell}
\renewcommand{\b}{\beta}
\DeclareMathOperator{\card}{\mathrm{card}}
\renewcommand{\mod}{\operatorname{mod}}
\renewcommand{\P}{\mathbb{P}}
\newcommand{\digitsum}{s}
\newcommand{\tL}{1} 
\newcommand{\tO}{0} 
\newtheorem{theorem}{Theorem}[section]
\newtheorem{lemma}[theorem]{Lemma}
\newtheorem{proposition}[theorem]{Proposition}
\newtheorem{property}[theorem]{Property}
\newtheorem{conjecture}[theorem]{Conjecture}
\newtheorem{claim}[theorem]{Claim}
\theoremstyle{definition}
\theoremstyle{remark}
\begin{document}
 \selectlanguage{english}
  \title{Binary-ternary collisions and the last significant digit of $n!$ in base 12}
 \author[J.-M. Deshouillers, P. Jelinek, L. Spiegelhofer]{ Jean-Marc Deshouillers, Pascal Jelinek, Lukas Spiegelhofer}

\maketitle

\begin{abstract} 
The third-named author recently proved [Israel J. of Math. 258 (2023), 475--502] that there are infinitely many \textit{collisions} of the base-2 and base-3 sum-of-digits functions. In other words, the equation
$$
s_2(n)=s_3(n)
$$
admits infinitely many solutions in natural numbers. We refine this result and prove that every integer $a$ in $\{1, 2, \cdots, 11\}$ appears as the last nonzero digit of $n!$ in base $12$ infinitely often.
\end{abstract}

\section{Introduction}
\label{sec_Intro}
\let\thefootnote\relax\footnote{Classification numbers: 11A63, 11K31, 11B75.\\ \indent Key expressions:  sums of digits in bases 2 and 3, last significant digit of $n!$}
The sequence A136698 of the On-Line Encyclopedia of Integer Sequences \cite{OEIS}  provides the last nonzero digit of the number $n!$ in base 12. \\
Let us define this notion: when we write a positive integer $n$ in base $b$ (where $b$ is an integer larger than $1$) as
\begin{equation}\label{ninbaseb}
n = \sum_{j \ge 0} a_j^{(b)}(n) b^j \; \text{ where } \;  a_j^{(b)}(n) \in \D_b =\{0, 1, \ldots, b-1\},
\end{equation}
its last nonzero digit in base $b$ is defined by
\begin{equation}\label{lnzdigit}
\l_b(n)=\min\{j \colon a_j(n) \neq 0\}.
\end{equation}
A quick glance at the first elements of the above-mentioned sequence reveals that all the numbers in $\pd _{12}= \{1, 2, \ldots, 11\}$ occur in the sequence, the numbers $4$ and $8$ seeming to occur more frequently. \\
Indeed, it was proved by I. Z. Ruzsa and the first-named author in 2011 \cite{DR} that the digits $4$ and $8$ each occur with asymptotic density $1/2$. The next year, the first-named author showed in \cite{D1} that each of the digits $3$, $6$ and $9$ occurs infinitely often. The paper concludes with two open questions: do all the numbers from $\{1, 2, \ldots, 10,  11\}$ occur infinitely often in the sequence $\left(\ell_{12}(n!)\right)_n$? How often do $3, 6$ and $9$ occur in $\left(\ell_{12}(n!)\right)_{n\le N}$ when $N$ is large?
The first aim of this paper is to give answers to these questions.
\begin{theorem}\label{thmlnzdigit}
There exist two  real numbers $\delta$ and $\eta$, with $0 < \delta \le \eta <1$, such that for each integer $a$ in $\{1, 2, 3, 5, 6, 7, 9, 10, 11\}$, we have
\begin{equation}\label{eqlnzdigit}
N^{\delta} \ll \card\{n \le N \colon \l_{12}(n!)=a\} \ll N^{\eta}.
\end{equation}
\end{theorem}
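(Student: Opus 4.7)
The plan is first to reduce the question to a statement about the difference $s_3(n)-s_2(n)$ and the unit part of $n!$ modulo $12$, then to obtain the upper bound from a large-deviation estimate and the lower bound by refining the third-named author's collision construction. Write $n!=2^{\alpha} 3^{\beta} c$ with $\gcd(c,6)=1$. Legendre's formula gives $\alpha=n-s_2(n)$ and $\beta=(n-s_3(n))/2$, so $\alpha-2\beta=s_3(n)-s_2(n)$. Since the largest power of $12=2^2\cdot 3$ dividing $n!$ is $12^k$ with $k=\min(\lfloor\alpha/2\rfloor,\beta)$, a short case analysis of $n!/12^k\bmod 12$ shows: if $s_3(n)-s_2(n)\ge 2$ then $\ell_{12}(n!)\in\{4,8\}$ (the generic regime already treated in \cite{DR}); if $s_3(n)=s_2(n)$ then $\ell_{12}(n!)\in\{1,5,7,11\}$, with the exact digit determined by $c\bmod 12$; if $s_3(n)-s_2(n)=1$ then $\ell_{12}(n!)\in\{2,10\}$; if $s_3(n)<s_2(n)$ and $\alpha$ is even then $\ell_{12}(n!)\in\{3,9\}$; if $s_3(n)<s_2(n)$ and $\alpha$ is odd then $\ell_{12}(n!)=6$.

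For the upper bound the key inequality is
\[
\#\{n\le N:\,s_3(n)-s_2(n)\le 1\}\ll N^{\eta}
\]
for some $\eta<1$, which dominates all four exceptional regimes. Under the uniform measure on $\{0,\dots,N-1\}$ one has $\mathbb{E}[s_b(n)]\sim\tfrac{b-1}{2}\log_b N$, so $\mathbb{E}[s_3(n)-s_2(n)]\sim(\log_3 N-\tfrac{1}{2}\log_2 N)$ grows linearly in $\log N$ with strictly positive slope. A Chernoff-type bound applied to the joint moment generating function of $(s_2(n),s_3(n))$---estimated by exploiting the near-independence of base-$2$ and base-$3$ digit statistics, e.g.\ via a block decomposition or the equidistribution of $(n\bmod 2^r,n\bmod 3^s)$---yields exponential saving in $\log N$, hence the required polynomial saving.

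For the lower bound I would revisit the collision construction of the third-named author and extend it in two directions. First, I would extract a quantitative version producing $\gg N^{\delta_0}$ solutions $n\le N$ to $s_2(n)=s_3(n)$. Second, I would prescribe additional arithmetic data: the residue of $c\bmod 12$ (which determines the digit within $\{1,5,7,11\}$), and in the other cases either $s_3(n)-s_2(n)=1$ or $s_3(n)-s_2(n)\le-1$ together with the parity of $\alpha$ (and of $\beta-\alpha/2$ for the $\{3,9\}$ dichotomy). The existing construction specifies the binary and ternary expansions of $n$ on disjoint blocks of positions, leaving room to append short auxiliary blocks that independently tune (i)~a prescribed net shift in $s_3-s_2$ and (ii)~the residue of the $6$-coprime part of $n!$ modulo $12$, using Wilson-type congruences for products of integers coprime to $6$ in short intervals. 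Provided each modification costs only a constant factor in the counting exponent, this should produce $\gg N^{\delta}$ solutions in each desired class.

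The hardest step is this augmentation: the collision construction of \cite{D1} and the third author's 2023 paper is intricate, and it is not a priori clear that one can simultaneously preserve the delicate balance $s_2=s_3$ (or a prescribed shift), control the residue of $c\bmod 12$, and retain a power-saving count. A further obstacle is the ``anti-collision'' regime $s_3(n)<s_2(n)$ needed for digits $3,6,9$: this runs against the natural drift of $s_3-s_2$ and will probably require a variant of the construction biased to force many $0$'s in the binary expansion and many $2$'s in the ternary expansion simultaneously, without destroying the quantitative lower bound.
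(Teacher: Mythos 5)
Your reduction of $\ell_{12}(n!)$ to the sign and size of $s_3(n)-s_2(n)$, the unit part $c$, and the parity of $\alpha$ is correct and matches \eqref{v4large}--\eqref{v3isv4} and \eqref{157eleven}--\eqref{2ten}. The upper bound is also essentially right in spirit, but the route you sketch --- a Chernoff bound for the \emph{joint} moment generating function of $(s_2,s_3)$ justified by ``near-independence'' of binary and ternary digit statistics --- appeals to exactly the kind of joint control that is unavailable in this problem (its absence is why collisions were open for so long), and you do not say how to establish it. It is also unnecessary: the event $s_3(n)\le s_2(n)+1$ is contained in the union of the two single-base events $\{s_3(n)\le 0.82\log n\}$ and $\{s_2(n)\ge 0.82\log n-1\}$, and since $0.82$ lies strictly between the normalized means $1/(2\log 2)\approx 0.72$ of $s_2$ and $1/\log 3\approx 0.91$ of $s_3$, each event separately has counting exponent $<1$ by a one-dimensional Hoeffding bound (Lemma \ref{folklore}). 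This simple threshold trick is what the paper uses.

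The genuine gap is in the lower bound. You correctly identify the hardest step --- preserving a power-saving collision count while prescribing $c\bmod 12$ (to hit each of $1,5,7,11$ individually) and while forcing $s_3(n)<s_2(n)$ (for $3,6,9$) --- but you leave it as an open programme, and as stated it is far from clear it can be carried out: $c$ is a product over all $m\le n$ of $6$-coprime parts, and tying its residue modulo $12$ to the digit blocks of the collision construction is a substantial extra difficulty, while an ``anti-collision'' construction runs against the drift of $s_3-s_2$, as you note. The paper sidesteps both obstacles with two elementary devices. First (Proposition \ref{consblocks}, the trick of \cite{D1}): if $12^3\mid n$ and $s_2(n)=s_3(n)$, then $\ell_{12}((n+k)!)\equiv\ell_{12}(n!)\,\ell_{12}(k!)\pmod{12}$ for $1\le k\le 10$, and since multiplication by a unit permutes the set $\{1,2,5,7,10,11\}$ modulo $12$, the ten consecutive factorials realize \emph{all} of these digits at once --- no control of $c\bmod 12$ is ever needed. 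Second, for $\{3,6,9\}$: adding the fixed number $R=3^92^4$ to a collision $n$ divisible by $2^{19}3^{12}$ increases $s_2$ by $8$ and $s_3$ by $4$, so $n+R$ satisfies $s_3=s_2-4$; the anti-collision is manufactured from a collision by a single additive shift, and Proposition \ref{Prop3} then produces $3$, $6$ and $9$ in a short block. The only strengthening of the collision result these devices require is Theorem \ref{thmJS}: collisions in a prescribed residue class modulo $2^k3^\ell$, with a power-saving count. Without these (or comparable) ideas, your lower-bound plan remains a sketch of a harder and unproven programme rather than a proof.
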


As we shall explain in Section \ref{seclnz} our question is closely connected to the existence of integers $n$ such that the sums of their digits in bases $2$ and $3$ are equal, where the sum of the digits of the integer $n$ in base $b$ is given by 
\begin{equation}\label{sumdig}
s_b(n) = \sum_{j \ge 0} a_j^{(b)}(n), 
\end{equation}
where $a_j^{(b)}(n)$ is introduced in (\ref{ninbaseb}). The third-named author \cite{S} has recently proved that there exist infinitely many integers $n$ such that $s_2(n) = s_3(n)$. Our proof of Theorem \ref{thmlnzdigit} relies on a slight extension of this result.
\begin{theorem}\label{thmJS}
Let $k, \l, a$ be nonnegative integers. There exists a positive real number $\b$ such that 
\begin{equation}\label{eqJS}
\card\{n \le N \colon n \equiv a (\mod 2^k3^{\l})\} \; \text{ and }\;  s_2(n)=s_3(n)\} \gg N^{\b}.
\end{equation}
\end{theorem}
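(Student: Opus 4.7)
The plan is to reduce Theorem~\ref{thmJS} to a mild strengthening of the main theorem of \cite{S} via a direct digit-level reformulation. I first write $n = Mm + a$ with $M = 2^k 3^\l$, and decompose $a = a_2 + 2^k a'' = a_3 + 3^\l a'''$, where $a_2 = a \bmod 2^k$, $a_3 = a \bmod 3^\l$, $a'' = \floor{a/2^k} \in [0, 3^\l)$, and $a''' = \floor{a/3^\l} \in [0, 2^k)$. Since then $n = 2^k(3^\l m + a'') + a_2 = 3^\l(2^k m + a''') + a_3$, and $a_2 < 2^k$, $a_3 < 3^\l$, a direct digit analysis in bases $2$ and $3$ yields
\[
s_2(n) = s_2(a_2) + s_2(3^\l m + a''), \qquad s_3(n) = s_3(a_3) + s_3(2^k m + a'''),
\]
so the condition $s_2(n) = s_3(n)$ is equivalent to
\[
s_2(3^\l m + a'') - s_3(2^k m + a''') = s_3(a_3) - s_2(a_2) =: D,
\]
a fixed integer of size $O(k + \l)$. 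The count in \eqref{eqJS} is therefore recast as a count of $m \le (N-a)/M$ satisfying this shifted collision identity.

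The next step is to apply the method of \cite{S} to the shifted count. The reformulated condition differs from the setup of \cite{S} by three features: (i) $s_2$ is evaluated at $3^\l m + a''$ rather than $m$; (ii) $s_3$ is evaluated at $2^k m + a'''$ rather than $m$; and (iii) the target is the fixed integer $D$ rather than $0$. Since $\gcd(3^\l, 2) = \gcd(2^k, 3) = 1$, the affine substitutions in (i) and (ii) are bijections that mix the relevant base-$b$ expansions in a uniform way; at the level of the exponential sums or combinatorial estimates underlying \cite{S}, these substitutions amount to a reparametrization that preserves the key cancellation mechanisms. Item (iii) is equally harmless, as the argument of \cite{S} naturally produces solutions achieving any prescribed difference $s_2(m) - s_3(m)$ in a controlled range, and in particular the fixed value $D$.

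The main obstacle I anticipate is carrying out this extension rigorously: one must verify that the quantitative estimates of \cite{S} remain valid, with explicit (if weaker) constants, under the two coprime affine substitutions and the nonzero target $D$. I expect the resulting exponent $\beta$ in Theorem~\ref{thmJS} to be strictly positive for every fixed pair $(k, \l)$ but to degrade as $k$ or $\l$ grow, since the effective counting range shrinks from $N$ to $N/M$ and the number of constrained low-order digits grows with $k$ and $\l$.
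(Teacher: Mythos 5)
Your opening reduction is correct: writing $n=2^k3^\l m' + a_2 = 3^\l 2^k m''+a_3$ with $a_2<2^k$, $a_3<3^\l$ does split the digit sums as you claim, and it recasts the problem as counting $m$ with $s_2(3^\l m+a'')-s_3(2^k m+a''')=D$. But this is only a reformulation; the entire mathematical content of the theorem lies in the step you defer to ``the method of \cite{S}'', and that step does not go through by mere reparametrization. The construction in \cite{S} is not a generic exponential-sum/cancellation argument that is stable under coprime affine substitutions: it is an explicit digit-by-digit construction of integers $n$ with prescribed block structure in base $3$ whose base-$2$ digit sum is then controlled. Evaluating $s_2$ at $3^\l m+a''$ while simultaneously evaluating $s_3$ at $2^k m+a'''$ destroys the common variable on which that construction operates. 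The paper instead keeps $n$ itself as the variable and modifies the internals of \cite{S} (the parameter $\eta$ is forced to be a multiple of $2^m$, the shifts $d_j$ are redefined as $d_j=(1^{(j+1+J)\eta}0^\l)_3$ so that they are divisible by both $3^\l$ and $2^k$, and the residue-class bookkeeping in the relevant equations of \cite{S} is redone) precisely so that the constructed integers stay in the class $a \bmod 2^k3^\l$. Some concrete adaptation of this kind is unavoidable, and your proposal supplies none of it.

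A second, independent gap is your claim that the argument of \cite{S} ``naturally produces solutions achieving any prescribed difference $s_2-s_3$''. It does not: the construction yields only \emph{almost-collisions}, $s_2(n)-s_3(n)\in\{0,1\}$, and converting these into exact collisions (let alone into a prescribed nonzero target $D$) while remaining in the prescribed residue class requires a separate mechanism. This is exactly the content of the paper's Lemma \ref{trick} and Proposition \ref{Almost}: one adds a carefully chosen shift $t$ whose leading digits in bases $p$ and $q$ are arranged, via Bezout, to change $s_p-s_q$ by exactly $d$ (any multiple of $\gcd(p-1,q-1)$, which is $1$ here) without disturbing the low-order digits that fix the residue class. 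Your item (iii) waves this away, but it is a genuine additional argument, not a free by-product of \cite{S}. As it stands, the proposal is a correct change of variables followed by an assertion of the theorem.
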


We shall conclude the paper by some consideration on the question of characterizing the integers $b$ such that for any integer $a \in \{1, 2, \ldots, b-1\}$, there exist infinitly many integers for which $\l_b(n!)=a$.

\section{Acknowledgements}
Jean-Marc Deshouillers acknowledges support by the FWF--ANR project ArithRand (grant numbers I4945 and ANR-20-CE91-0006)
Pascal Jelinek was supported by the FWF project P36137 and the FWF--RSF joint project DIOARIANA (grant number I5554)
Lukas Spiegelhofer acknowledges support by the FWF--ANR projects ArithRand (grant number I4945), and SymDynAr (grant number I6750).

\section{Proof of Theorem \ref{thmlnzdigit} assuming Theorem \ref{thmJS}}\label{seclnz}

\subsection{Valuation and last nonzero digit}

Let $b$ be an integer larger than $1$ and $n$ a positive integer. We denote by $v_b(n)$ the largest nonnegative integer such that $b^{v_b(n)}$ divides $n$ and we write
\begin{equation}\label{val}
n = b^{v_b(n)} n_b, \; \text{ where $b$ does not divide } n_b.
\end{equation}
The following relations are easily verified
\begin{eqnarray}
\label{valprod}
\text{If $b_1$ and $b_2$ are coprime, then } v_{b_1b_2}(n) &= &\min\left(v_{b_1}(n), v_{b_2}(n)\right),\\
\label{valandlnzd}
\text{for any $b\ge 2$ } \colon \l_b(n)& \equiv& n_b (\mod b).
\end{eqnarray}
In the special case when $b=12$, we deduce the following
\begin{eqnarray}
\label{v4large}
v_4(n) > v_3(n)& \Longleftrightarrow& \l_{12}(n) \in \{4, 8\},\\
\label{v3large}
v_4(n) < v_3(n)& \Longleftrightarrow& \l_{12}(n) \in \{3, 6, 9\},\\
\label{v3isv4}
v_4(n) = v_3(n) &\Longleftrightarrow& \l_{12}(n) \in \{1,2,5,7,10,11\}.
\end{eqnarray}

We owe to Legendre a compact formula for the the vauation of $n!$ in a prime base $p$, namely
\begin{equation}\label{Legendre}
v_p(n!) = \frac{n-s_p(n)}{p-1}, \; \text{ where the function $s$ is defined in (\ref{sumdig}).}
\end{equation}
This implies that for a prime power base $p^{a}$ we have
\begin{equation}\label{Legendregen}
v_{p^{a}}(n!) = \left\lfloor\frac{n-s_p(n)}{a(p-1)}\right\rfloor.
\end{equation}

In the special case of the base $12$, we have
$$
v_3(n!) = \frac{n-s_3(n)}{2} \; \text{ and } \; v_4(n!) = \left\lfloor \frac{n-s_2(n)}{2} \right\rfloor
$$
and thus, by (\ref{v3isv4}), we have
\begin{equation}\label{exceptcase}
\l_{12}(n!) \in \{1,2,5,7,10,11\} \Longleftrightarrow s_3(n) \in\{s_2(n), s_2(n)+1\}.
\end{equation}
We can go one step further and show that
\begin{eqnarray}
\label{157eleven}
\l_{12}(n!) \in \{1,5,7,11\} &\Longleftrightarrow & s_3(n) = s_2(n)\\
\notag
 \text{ and }&& \\
 \label{2ten}
 \l_{12}(n!) \in \{2, 10\}& \Longleftrightarrow &s_3(n) = s_2(n)+1.
\end{eqnarray}
Let us prove it. We first assume that $s_2(n)=s_3(n)$; by Legendre's formula (\ref{Legendre}), $n!$ can be written as $3^{u}R$ and $4^{u}S$, where $R$ is not divisible by $3$ and $S$ is not divisible by $2$; thus $n! = 12^{u}T$ where $T$ is not divisible by $2$ or $3$;  thus  $\ell_{12}(n!)$ belongs to $\{1, 5, 7, 11\}$; this implies the ``$\Leftarrow$ implication" in (\ref{157eleven}). In a similar way, when $s_2(n) = s_3(n)-1$, we can write $n! = 12^{u}T$ for some $T$ which is divisble by $2$ but neither by $4$ nor $3$;  thus  $\ell_{12}(n!)$ belongs to $\{2,10\}$, which implies the ``$\Leftarrow$ implication" in (\ref{2ten}). Those two relations, combined with the equivalence (\ref{exceptcase}), imply the equivalence in (\ref{157eleven}) and in (\ref{2ten}).

\subsection{Reduction to consecutive blocks}

From (\ref{exceptcase}), the main result of \cite{S} implies that at least one of the digits $1, 2, 5, 7, 10, 11$ occurs quite often, but it may be difficult to decide which are the digits which occur indeed sufficiently often. To show that indeed all of them occur sufficiently often, we shall use a trick introduced in \cite{D1} and consider blocks of consecutive integers in which all those values occur.
\begin{proposition}\label{consblocks}
If $n$ is divisible by $12^3$ and $s_2(n)=s_3(n)$, then
$$
\{1,2,5,7,10, 11\} \subset \{\ell_{12}((n+1)!), \ell_{12}((n+2)!), \dots, \ell_{12}((n+10)!))\}.
$$
\end{proposition}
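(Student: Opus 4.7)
The plan is to exploit the divisibility $12^3 \mid n$ twice: first, to make $s_b(n+k) = s_b(n) + s_b(k)$ for $b \in \{2, 3\}$, and second, to reduce the coprime-to-$6$ part of $n+k$ modulo $12$ to that of $k$. Together these will let me pin down $\l_{12}((n+k)!) \bmod 12$ as an explicit function of $k$ and of the single residue $Q_0 \eqdef n!/(2^{v_2(n!)} 3^{v_3(n!)}) \bmod 12$.

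First, since $10 < 2^6$ and $10 < 3^3$, no carries occur when adding $k \le 10$ to $n$ in either base, so $s_b(n+k) = s_b(n) + s_b(k)$ for $b \in \{2, 3\}$. Combining this with $s_2(n) = s_3(n)$, I obtain $d_k \eqdef s_3(n+k) - s_2(n+k) = s_3(k) - s_2(k)$. A direct tabulation gives $d_k = 0$ for $k \in \{1, 6, 7, 10\}$ and $d_k = 1$ for $k \in \{2, 4, 5\}$; the remaining values $d_3 = d_9 = -1$ and $d_8 = 3$ correspond to last digits outside $\{1,2,5,7,10,11\}$. Applying (\ref{157eleven}) and (\ref{2ten}), I deduce $\l_{12}((n+k)!) \in \{1, 5, 7, 11\}$ for $k \in \{1, 6, 7, 10\}$ and $\l_{12}((n+k)!) \in \{2, 10\}$ for $k \in \{2, 4, 5\}$. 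So the proposition reduces to showing that \emph{each} of $1, 5, 7, 11$ actually appears in the first group and that \emph{both} $2$ and $10$ appear in the second.

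To refine to exact residues I write $(n+k)! = 2^{v_2((n+k)!)} 3^{v_3((n+k)!)} Q_k$ with $\gcd(Q_k, 6) = 1$. When $d_k = 0$, Legendre's formula yields $v_2((n+k)!) = 2 v_3((n+k)!)$, whence $\l_{12}((n+k)!) \equiv Q_k \pmod{12}$; when $d_k = 1$, one has $v_2 = 2 v_3 + 1$, whence $\l_{12}((n+k)!) \equiv 2 Q_k \pmod{12}$. The $Q_k$ satisfy the multiplicative recursion $Q_k \equiv Q_{k-1} \cdot q(n+k) \pmod{12}$, where $q(m) \eqdef m/(2^{v_2(m)} 3^{v_3(m)})$ is the coprime-to-$6$ part of $m$. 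Here the full strength of $12^3 \mid n$ is used: since $\max_{1 \le k \le 10} 2^{v_2(k)} 3^{v_3(k)} = 9$ (attained at $k=9$) and $12 \cdot 9 \mid 12^3$, I conclude $q(n+k) \equiv q(k) \pmod{12}$ for every such $k$, whence $Q_k \equiv Q_0 \prod_{j=1}^{k} q(j) \pmod{12}$.

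The argument closes with a short calculation. Computing the partial products yields $Q_1 \equiv Q_2 \equiv Q_4 \equiv Q_0$, $Q_5 \equiv Q_6 \equiv 5 Q_0$, $Q_7 \equiv 11 Q_0$, and $Q_{10} \equiv 7 Q_0$ modulo $12$. Since $\{1, 5, 7, 11\}$ is the group $(\Z/12\Z)^\times$, the set $\{Q_1, Q_6, Q_7, Q_{10}\} = Q_0 \cdot \{1, 5, 11, 7\}$ coincides with $\{1, 5, 7, 11\}$ for every admissible $Q_0$; and $\{2 Q_2, 2 Q_4, 2 Q_5\} = \{2 Q_0, 2 Q_0, 10 Q_0\}$ always contains both $2$ and $10$ modulo $12$. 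I do not anticipate any substantive obstacle; the only non-obvious point is recognising that the specific divisibility $12^3 \mid n$ is precisely what makes $q(n+k) \bmod 12$ depend only on $k$ (and hence lets the whole computation collapse to a finite table).
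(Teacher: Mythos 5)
Your proof is correct, and it reaches the final finite verification by a somewhat different route than the paper. The paper proves $\ell_{12}((n+k)!)\equiv\ell_{12}(n!)\,\ell_{12}(k!)\pmod{12}$ by first showing $\ell_{12}((n+1)\cdots(n+k))=\ell_{12}(k!)$ (Claim \ref{prodtofact}) --- which for $k\in\{9,10\}$ requires expanding the product to first order in $n$ modulo $12^6$, since $9!$ and $10!$ are already divisible by $12^3$ --- and then invokes the invariance of the set $\{1,2,5,7,10,11\}$ under multiplication by units modulo $12$. You instead track the $6$-coprime part $Q_k$ of $(n+k)!$ and the quantity $s_3-s_2$ directly: carry-free addition of $k\le 10$ to a multiple of $2^6 3^3$ gives your table of $d_k$, Legendre's formula converts $d_k$ into the relation between $v_2$ and $v_3$ (recovering \eqref{157eleven} and \eqref{2ten} for $n+k$), and the reduction $q(n+k)\equiv q(k)\pmod{12}$ collapses $Q_k$ to $Q_0\prod_{j\le k}q(j)$. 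The net effect is the same --- each target digit is $\ell_{12}(n!)$ times a fixed unit, or twice one --- but your version sidesteps the paper's mod-$12^6$ second-order expansion for $k\in\{9,10\}$ entirely, because you only ever need valuations and unit parts of the individual factors $n+k$, never the last nonzero digit of the whole product $(n+1)\cdots(n+k)$. One presentational nit: the step ``$\max_k 2^{v_2(k)}3^{v_3(k)}=9$ and $12\cdot 9\mid 12^3$'' does not by itself yield $12\cdot 2^{v_2(k)}3^{v_3(k)}\mid n$ for every $k$, since divisibility is not monotone in size; the correct (and true) justification is that $2^{v_2(k)}3^{v_3(k)}$ divides $2^3 3^2$, hence $12^2$, for every $k\le 10$.
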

\begin{proof}
	The statement is a simple consequence of the following claims.
	\begin{claim}\label{digitfact}
		We have
		\begin{equation}\notag
			(\ell_{12}(1!), \ell_{12}(2!), \ell_{12}(5!), \ell_{12}(6!), \ell_{12}(7!), \ell_{12}(10!)) = (1, 2, 10, 5, 11, 7).
		\end{equation}
		\begin{proof}
			This is a simple computation; one can also look at the sequence A136698 in \cite{OEIS}.
		\end{proof}
	\end{claim}
	\begin{claim}\label{prodtofact}
		If $n$ is divisible by $12^3$ and $k$ is an integer in $[1, 10]$, we have
		\begin{equation}\label{digconsprod}
			\ell_{12}((n+1)(n+2)\cdots (n+k)) = \ell_{12}(k!).
		\end{equation}
	\end{claim}
	\begin{proof}
		Since $n$ is divisible by $12^3$, we have
		$$
		(n+1)(n+2)\cdots(n+k) \equiv k! (\operatorname{mod} 12^3).
		$$
		For $k \le 8$, the claim follows from the fact that $8!$ is not divisible by $12^3$. 
		
		For $k$ in $\{9, 10\}$, we go one step further in the 
		expension of the product as a polynomial in $n$, keeping the first degree terms in $n$, i.e.
		$$
		(n+1)(n+2)\cdots(n+k)  \equiv k! + n\left(\frac{k!}{1} + \cdots + \frac{k!}{k} \right) (\operatorname{mod} 12^6).
		$$
		We notice that $k!$ is not divisible by $12^5$ but all the terms inside the bracket in the RHS are divisible by $12^2$.
	\end{proof}
	\begin{claim}\label{multinv}
		Let $x$ and $y$ be two integers such that $\ell_{12}(x)$ is in $\{1, 5, 7, 11\}$, we have
		\begin{equation}
			\ell_{12}(xy) \equiv \ell_{12}(x) \ell_{12}(y) (\operatorname{mod} 12).
		\end{equation}
	\end{claim}
	\begin{proof}
		We can write $x = 12^{\alpha}(\ell_{12}(x) + 12X)$ and $y=12^{\beta}(\ell_{12}(y)+12Y)$. Thus, we have
		$xy = 12^{\alpha+\beta}(\ell_{12}(x) \ell_{12}(y) +12Z)$. Since $\ell_{12}(y) $ is not zero and $\ell_{12}(x) $ is invertible modulo $12$, the product $\ell_{12}(x) \ell_{12}(y) $ is not congruent to $0$ modulo $12$ and it is thus congruent to $\ell_{12}(xy)$ modulo $12$. 
	\end{proof}
	\begin{claim}\label{ensinvar}
		The set  $\{1, 2, 10, 5, 7, 11\}$ of residues modulo $12$ is invariant by multiplication by $1$, $5$, $7$ or $11$.
	\end{claim}
	\begin{proof}
		A simple computation shows that we respectively get the sets $\{1, 2, 10, 5, 7, 11\}$, $\{5, 10, 2, 1, 11, 7\}$, $\{7, 2, 10, 11, 1, 5\}$ and $\{11, 10, 2, 7, 5, 1\}$.
	\end{proof}
	Let $n$ satisfy the conditions of Proposition \ref{consblocks} and $k$ be in $\{1, 2, 5, 6, 7, 10\}$. By (\ref{157eleven}), $\ell_{12}(n!)$ is in $\{1, 5, 7 , 11\}$. By Claim \ref{multinv}, we have $\ell_{12}((n+k)!) \equiv \ell_{12}(n!) \ell_{12}((n+1)(n+2)\cdots (n+k)) (\operatorname{mod} 12)$. By Claim \ref{prodtofact},  we have \\
	${\ell_{12}((n+k)!) \equiv
		\ell_{12}(n!) \ell_{12}(k!) (\operatorname{mod} 12)}$. By (\ref{157eleven}) and Claim \ref{digitfact}, the set \\
	${(\ell_{12}((n+1)!), \ell_{12}((n+2)!), \ell_{12}((n+5)!), \ell_{12}((n+6)!), \ell_{12}((n+7)!), \ell_{12}((n+10)!))}$ is - modulo $12$ - the product by $1$, $5$, $7$ or $11$ of the set $\{1, 2, 10, 5, 7, 11\}$, and by Claim \ref{multinv}, it is the set $\{1, 2, 5, 7, 10, 11\}$. This ends the proof of Proposition \ref{consblocks}.
	\end{proof}

\subsection{Proof of Theorem \ref{thmlnzdigit}}\label{subseclnz}

We start with the lower bound. Since we are concerned with finitely many values of $a$, it is enough to prove that for each value of $a$ there exists $\delta$ such that the lower bound of (\ref{eqlnzdigit}) holds. 

For $a$ in $\{1, 2, 5, 7, 10, 11\}$, this is a simple consequence of Proposition \ref{consblocks} and Theorem \ref{thmJS} applied with $a=0, k=6$ and $\l = 3$.

Let us now assume that $a$ is in $\{3, 6, 9\}$. The following results (Proposition 3.) was proved in \cite{D1}
\begin{proposition}\label{Prop3}
Let $n$ be divisible by $144$ and be such that $v_3(n!) \ge v_4(n!)+2$. Then for any $a \in \{3, 6, 9\}$ there exists $k \in \{0, 2, 3, 7\}$ such that $\ell_{12}((n+k)!)=a$.
\end{proposition}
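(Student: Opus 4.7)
My plan is to track, for each $k \in \{0, 2, 3, 7\}$, both the $2$-adic valuation of $(n+k)!$ and the residue of $(n+k)!/12^{v_4((n+k)!)}$ modulo $12$. The main preliminary refines (\ref{v3large}): for any positive integer $m$ with $v_3(m) > v_4(m)$, write $m/12^{v_4(m)} = 2^e \cdot 3^{\alpha} \cdot U$ where $e \in \{0,1\}$, $\alpha \ge 1$, and $\gcd(U, 6) = 1$; a short computation modulo $12$ shows $\ell_{12}(m) = 6$ when $e = 1$, and $\ell_{12}(m) \in \{3, 9\}$ when $e = 0$. On the bookkeeping side, using $144 \mid n$, the valuations of $P_k := (n+1)(n+2)\cdots(n+k)$ are $(v_2(P_k), v_3(P_k)) = (0,0), (1,0), (1,1), (4,2)$ for $k = 0, 2, 3, 7$ respectively. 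Together with the hypothesis $c := v_3(n!) - v_4(n!) \ge 2$, one checks that $v_3((n+k)!) > v_4((n+k)!)$ for all four $k$ (this is where $c \ge 2$ is tight, being needed at $k = 2$ when $v_2(n!)$ is odd), and that the parity of $v_2((n+k)!)$ across $k = 0, 2, 3, 7$ follows the pattern $(e_0, 1-e_0, 1-e_0, e_0)$ with $e_0 \equiv v_2(n!) \pmod 2$. By the preliminary, exactly two of the four values $\ell_{12}((n+k)!)$ equal $6$, and the other two lie in $\{3, 9\}$.

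The crux is to show that the two $\{3,9\}$-valued entries together cover both $3$ and $9$. Set $j := v_4(n!)$. If $v_2(n!)$ is odd, the relevant pair is at $k \in \{2, 3\}$, both with $v_4$-valuation $j+1$; since $(n+3)!/12^{j+1} = (n+2)!/12^{j+1} \cdot (n+3)$ and $n+3 \equiv 3 \pmod{12}$, one gets $\ell_{12}((n+3)!) \equiv 3\cdot \ell_{12}((n+2)!) \pmod{12}$, and multiplication by $3$ swaps $3$ and $9$. If $v_2(n!)$ is even, the pair sits at $k \in \{0, 7\}$, with $(n+7)!/12^{j+2} = (n!/12^j) \cdot Q$ where $Q := P_7/144$ is coprime to $6$. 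The key claim is $Q \equiv -1 \pmod{12}$. To see this, expand $P_7 = 7! + s_6\, n + O(n^2)$ with $s_6 = \sum_{i=1}^7 7!/i = 13068 = 12 \cdot 1089$; since $n = 144m$ makes both $n^2$ and $12n$ vanish modulo $1728$, one obtains $P_7 \equiv 5040 \equiv 1584 \pmod{1728}$, hence $Q \equiv 11 \equiv -1 \pmod{12}$. Therefore $\ell_{12}((n+7)!) \equiv -\ell_{12}(n!) \pmod{12}$, again swapping $3$ and $9$.

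Putting the cases together, the set $\{\ell_{12}((n+k)!) : k \in \{0, 2, 3, 7\}\}$ always contains $6$ together with both $3$ and $9$, so every element of $\{3, 6, 9\}$ appears. The main technical obstacle is the modular computation $Q \equiv -1 \pmod{12}$; paired with the trivial $n+3 \equiv 3 \pmod{12}$, it explains the choice of shift set, since these two \textbf{swap factors} each send $\{3, 9\}$ to itself nontrivially modulo $12$, producing a $3 \leftrightarrow 9$ exchange in either parity case of $v_2(n!)$.
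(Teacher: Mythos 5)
Your proposal is correct. Note that the paper does not actually prove Proposition~\ref{Prop3}: it is quoted verbatim from the reference \cite{D1}, so there is no in-paper argument to compare against; you have supplied a complete, self-contained proof where the paper gives only a citation. I checked the key points. Your preliminary refinement of (\ref{v3large}) is right: when $v_3(m)>v_4(m)$ one has $v_{12}(m)=v_4(m)$ and $m/12^{v_4(m)}=2^e3^\alpha U$ with $e=v_2(m)\bmod 2$, giving $\ell_{12}(m)=6$ for $e=1$ and $\ell_{12}(m)\in\{3,9\}$ for $e=0$. The valuation table $(v_2(P_k),v_3(P_k))=(0,0),(1,0),(1,1),(4,2)$ for $k=0,2,3,7$ is correct for $144\mid n$, the hypothesis $v_3(n!)\ge v_4(n!)+2$ is indeed exactly what is needed to keep $v_3>v_4$ at $k=2$ when $v_2(n!)$ is odd, and the parity pattern $(e_0,1-e_0,1-e_0,e_0)$ correctly isolates two indices with value $6$ and two with values in $\{3,9\}$. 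The two swap factors also check out: $n+3\equiv 3\pmod{12}$ gives $\ell_{12}((n+3)!)\equiv 3\,\ell_{12}((n+2)!)$, and $7!=5040\equiv 1584=144\cdot 11\pmod{1728}$ together with $\sum_{i=1}^{7}7!/i=13068=12\cdot 1089$ (so that the linear and higher terms in $n=144m$ vanish modulo $1728$) gives $Q\equiv -1\pmod{12}$; each swaps $3$ and $9$. Your argument is in the same spirit as the paper's proof of Proposition~\ref{consblocks} (expanding $(n+k)!=n!\cdot P_k$ and tracking residues of the product modulo a suitable power of $12$), but adapted to the non-invertible residues, which forces the extra bookkeeping of $v_2$-parity that you carry out correctly.
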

The number $R=3^92^4$ was introduced in \cite{D1}; it satisfies the following properties
$144 \| R, s_2(R)=8, s_3(R) = 4, R < 2^{19} \; \text{ and }\; R< 3^{12}$. \\
By Theorem \ref{thmJS}, there are more than $N^{\b}$ integers  $n \le N$ which are divisible by $2^{19}3^{12}$ and such that $s_2(n)=s_3(n)$.\\
For such $n$, one has $s_2(n+R) = s_2(n) + 8$ and $s_3(n+R)=s_3(n)+4=s_2(n)+4 = s_2(n+R)-4$. By Legendre's formula, we thus have $v_3((n+R)!) \ge v_4((n+R)!)$. By contruction, $n+R$ is divisible by $144$ and by Proposition \ref{Prop3}, all the digits $3, 6$ and $9$ occur in the sequence $\{\ell_{12}((n+R)!), \ell_{12}((n+R+2)!), \ell_{12}((n+R+3)!), \ell_{12}((n+R+7)!)\}$. \\

As regards the upper bound, the following was stated in \cite{DR} 

\begin{claim}\label{fewoutside48}
\textit{We can show that $\card\{n \le x \colon \l_{12}(n!)=a\} =O(x^c)$ with some $c<1$ whenever $a \neq 4, 8$.}'
\end{claim}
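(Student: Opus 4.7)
The plan is to convert the condition $\ell_{12}(n!)\not\in\{4,8\}$ into a digit-sum inequality and then bound the resulting counting function by a Chernoff-type exponential moment argument. By the contrapositive of \eqref{v4large}, the condition $\ell_{12}(n!)\not\in\{4,8\}$ is equivalent to $v_4(n!)\le v_3(n!)$. Plugging in Legendre's formulas $v_3(n!)=(n-s_3(n))/2$ and $v_4(n!)=\lfloor(n-s_2(n))/2\rfloor$ and performing a short parity check (distinguishing the two cases $n-s_2(n)$ even and odd) shows that this in particular forces
\[
s_3(n)\le s_2(n)+1.
\]
Hence it suffices to establish $\card\{n\le N\colon s_3(n)\le s_2(n)+1\}\ll N^c$ for some $c<1$.

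For any $\lambda>0$ the inequality $e^{\lambda(s_2(n)-s_3(n)+1)}\ge 1$ on the event under study yields the Chernoff-type bound
\[
\card\{n\le N\colon s_3(n)\le s_2(n)+1\}\le e^{\lambda}\sum_{n\le N}e^{\lambda s_2(n)-\lambda s_3(n)}.
\]
The mixed moment on the right does not factor over digits (the base-$2$ and base-$3$ digit functions are not independent), so I would decouple it via Cauchy--Schwarz:
\[
\sum_{n\le N}e^{\lambda s_2(n)-\lambda s_3(n)}\le\Bigl(\sum_{n\le N}e^{2\lambda s_2(n)}\Bigr)^{1/2}\Bigl(\sum_{n\le N}e^{-2\lambda s_3(n)}\Bigr)^{1/2}.
\]
Setting $K=\lceil\log_2 N\rceil$, $L=\lceil\log_3 N\rceil$ and extending to $n<2^K$, $n<3^L$ (all summands being non-negative), each factor collapses by digit-wise independence into the closed forms $(1+e^{2\lambda})^K$ and $(1+e^{-2\lambda}+e^{-4\lambda})^L$.

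A Taylor expansion at $\lambda\to 0^+$ gives $\log((1+e^{2\lambda})/2)=\lambda+O(\lambda^2)$ and $\log((1+e^{-2\lambda}+e^{-4\lambda})/3)=-2\lambda+O(\lambda^2)$; combining everything yields
\[
\card\{n\le N\colon s_3(n)\le s_2(n)+1\}\ll N^{\,1+\lambda\bigl(\tfrac{1}{2\log 2}-\tfrac{1}{\log 3}\bigr)+O(\lambda^2)}.
\]
The crucial numerical input is $\tfrac{1}{2\log 2}<\tfrac{1}{\log 3}$, equivalent to $3<4$, so the coefficient of $\lambda$ is strictly negative and any sufficiently small $\lambda>0$ produces an exponent $c<1$, as required. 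The main obstacle is the absence of factorization in the mixed moment; Cauchy--Schwarz resolves it at the cost of doubling $\lambda$ inside each factor, which is affordable precisely because the mean gap $s_3(n)-s_2(n)$ grows as a strictly positive multiple of $\log N$ (again from $\log 3<2\log 2$), making $\{s_3(n)\le s_2(n)+1\}$ a genuine large-deviation event to which moment-generating-function methods apply.
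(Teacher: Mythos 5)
Your proof is correct, and it reaches the same reduction as the paper (via \eqref{v4large} and Legendre's formula, $\l_{12}(n!)\notin\{4,8\}$ forces $s_3(n)\le s_2(n)+1$, with the same parity check on the floor), but the large-deviation step is carried out by a genuinely different device. The paper proves a standalone lemma (Lemma \ref{folklore}) bounding $\card\{n<x:\ |s_b(n)-\tfrac{b-1}{2\log b}\log n|\ge\delta\log n\}$ via Hoeffding's inequality, and then argues by a threshold/union bound: either $s_3(n)\le 0.82\log n$ (a deviation for base $3$, whose mean constant is $\tfrac{1}{\log 3}=0.91\ldots$) or $s_2(n)\ge 0.82\log n-1$ (a deviation for base $2$, mean constant $\tfrac{1}{2\log 2}=0.72\ldots$), and each event is $O(x^c)$. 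You instead bound the mixed exponential moment $\sum_{n\le N}e^{\lambda(s_2(n)-s_3(n))}$ directly, decoupling the two non-independent digit sums by Cauchy--Schwarz rather than by a threshold, and then exploiting the exact factorization of $\sum e^{2\lambda s_2}$ and $\sum e^{-2\lambda s_3}$ over digits. Your expansions $\log((1+e^{2\lambda})/2)=\lambda+O(\lambda^2)$ and $\log((1+e^{-2\lambda}+e^{-4\lambda})/3)=-2\lambda+O(\lambda^2)$ are right, and the resulting exponent $1+\lambda(\tfrac{1}{2\log 2}-\tfrac{1}{\log 3})+O(\lambda^2)$ is indeed $<1$ for small $\lambda>0$; both proofs ultimately rest on the same numerical fact $\tfrac{1}{2\log 2}<\tfrac{1}{\log 3}$, i.e.\ $3<4$. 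What your route buys is self-containedness (no appeal to Hoeffding, only elementary factorization of geometric sums) and, if one optimizes $\lambda$, an explicit admissible value of $c$; what the paper's route buys is a reusable general lemma stated for arbitrary base $b$ and arbitrary deviation $\delta$. The Cauchy--Schwarz loss (doubling $\lambda$ in each factor) only affects the $O(\lambda^2)$ term and the quality of the constant, not the validity of the conclusion.
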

We give here a proof of this claim, based on the following \textit{folklore} result, for which we give a short proof relying on Hoeffding's inequality.\\
\begin{lemma}\label{folklore}
Let $b \ge 2$ be an integer and  $\delta$  a positive real number. There exists a real number $c < 1$ such that for all positive real $x$ one has
\begin{equation}\label{farfromthemean}
\card\left\{0 <n < x \colon \left|s_b(n)-\frac{b-1}{2\log b}\log n\right|\ge \delta \log n\right\} \ll x^c.
\end{equation}
\end{lemma}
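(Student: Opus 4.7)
The plan is to partition the counted range into the blocks $I_M = [b^{M-1}, b^M)$ of integers with exactly $M$ base-$b$ digits, apply Hoeffding's inequality separately on each block, and sum the resulting exponentially small bounds.

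Set $N = \lceil \log_b x \rceil$, so every $n$ to be counted lies in some $I_M$ with $M \le N$. For $n \in I_M$ one has $\log n = M \log b + O(1)$, so the hypothesis of \eqref{farfromthemean} implies, once $M$ exceeds some $M_0 = M_0(b, \delta)$, the cleaner inequality
$$\left| s_b(n) - \frac{b-1}{2} M \right| \ge \delta' M, \qquad \delta' = \frac{\delta \log b}{2}.$$
Next, identify $\{0, 1, \ldots, b^M - 1\}$ with the uniform probability space of length-$M$ digit strings. The digits $a_0(n), \ldots, a_{M-1}(n)$ then become i.i.d.\ random variables, uniform on $\{0, \ldots, b-1\}$, each with mean $(b-1)/2$ and range $[0, b-1]$. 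Hoeffding's inequality applied to their sum $s_b(n)$ yields
$$\card\left\{0 \le n < b^M \colon \left| s_b(n) - \frac{b-1}{2} M \right| \ge \delta' M \right\} \le 2 b^M \exp(-c_1 M),$$
with $c_1 = 2(\delta')^2/(b-1)^2 > 0$. Rewriting the right-hand side as $2 b^{cM}$ where $c = 1 - c_1/\log b \in (0,1)$, and summing the geometric series over $M \le N$ (absorbing the trivial $O(1)$ contribution from $M \le M_0$) gives the bound $O(b^{cN}) = O(x^c)$, as claimed.

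The argument is essentially routine; the only mildly delicate point is the passage from the continuous deviation condition normalised by $\log n$ to the block-uniform condition normalised by $M$. That passage costs a constant factor in the deviation (and a finite exceptional set of small $M$), but does not affect the exponential quality of the Hoeffding estimate, so no real obstacle arises.
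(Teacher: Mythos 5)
Your proof is correct and follows essentially the same route as the paper: both reduce the deviation condition normalised by $\log n$ to one normalised by a fixed digit length and then apply Hoeffding's inequality to $s_b$ viewed as a sum of i.i.d.\ uniform digits. The only difference is bookkeeping: you apply Hoeffding on each block $[b^{M-1},b^M)$ and sum a geometric series, whereas the paper applies it once on $[0,b^K)$ after discarding the $O(x^{\mu})$, $\mu<1$, integers with too few digits.
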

\begin{proof}
It is enough to prove the lemma when $x$ is a sufficiently large power of $b$, say $x=b^K$. A little computation shows that we can find $\lambda$ and $\mu$ with $0 < \lambda< \mu<1$ such that for any $k$ in $[\lambda K, \mu K]$ one has
$$
(K-k)\left(\delta \log b + (b-1)/2\right) \le \delta / 2 \log b^K.
$$
Then, for $n$ in $[b^k, b^K)$, one has
\begin{equation}\label{reductiontolargen}
\left|s_b(n)-\frac{b-1}{2\log b}\log n\right|\ge \delta \log n  \; \Rightarrow \; \left|s_b(n)-\frac{b-1}{2}K\right|\ge (\delta/2)\log b \times K.
\end{equation}

Since there are $(b^K)^{k/K}$ integers less than $b^k$ and $k/K \le \mu < 1$, Equation (\ref{reductiontolargen}) implies that to prove Lemma \ref{farfromthemean}, it is enough to prove that for any positive $\eta$, one has for any positive $\eta$ the existence of a real number $c <1$ such that 
\begin{equation}\label{reductocube}
 \card\left\{0 \le n < b^K   \colon \left|s_b(n)-\frac{b-1}{2}K\right|\ge \eta K\right\} \ll b^{cK}, \; \text{ as $K \rightarrow \infty$}.
\end{equation}
We can rephrase this question in a probabilistic setting. Let $\xi_1, \cdots, \xi_K$ be $K$ independent random variables uniformly distributed on $\{0, 1, \cdots, b-2, b-1\}$ and let $S_K = \xi_1 + \ldots +\xi_K$. We readily see that $S_K$ has the following properties
\begin{eqnarray}
\label{distrib}
\text{For any integer $ m $}\colon \P\{S_K=m\} &=& b^{-K} \card\{n < b^K \colon s_b(n)=m\},\\
\label{mean}
\mathbb{E}(S_K) &=& K\frac{b-1}{2}.
\end{eqnarray}
Hoeffding's inequality \cite{H} gives the following upper bound for the tail of the distribution of $S_K$
\begin{equation}\label{Hoeffding}
\P\left\{\left|S_K-\mathbb{E}(S_K)\right| \ge t \right\} \le 2 \exp\left(- \frac{2 t^2}{K(b-1)^2}\right).
\end{equation}
Relations (\ref{distrib}), (\ref{mean}) and (\ref{Hoeffding}) imply
\begin{eqnarray*}
 \card\left\{0 \le n < b^K   \colon \left|s_b(n)-\frac{b-1}{2}K\right|\ge \eta K\right\} &=& b^K \P\left\{\left|S_K-\mathbb{E}(S_K)\right| \ge \eta K \right\} \\
& \le &2 b^K \exp\left(-\frac{2(\eta K)^2}{K(b-1)^2}\right)\\
&\le& 2 b^{\left(1-\left(\frac{2 \eta^2}{(b-1)^2\log b}\right)\right)K},
\end{eqnarray*}
which proves the validity of (\ref{reductocube}) and thus ends the proof of Lemma \ref{folklore}.\\

By (\ref{v4large}), $\l_{12}(n!) \notin \{4,8\}$ is equivalent to $v_4(n!) \le v_3(n!)$ and thus
(\ref{Legendre}) implies that $\l_{12}(n!) \notin \{4,8\}$ is equivalent to $s_3(n) \le s_2(n)+1$. This relation occurs only rarely: 
\begin{itemize}
\item either $s_3(n) \le 0.82 \log n$. Since $(3-1)/(2\log 3) = 0.91...$, the set of such integers $n$ has an exponential density less than $1$ by Lemma \ref{folklore}, applied with $b=3$,
\item or $s_3(n) > 0.82 \log n$. We have then $s_2(n) \ge s_3(n) -1 > 0.82 \log n -1$ . Since $(2-1)/(2\log 2) = 0.72...$, the set of such integers $n$ has an exponential density less than $1$ by Lemma \ref{folklore}, applied with $b=2$.
\end{itemize}
This ends the proof of Claim \ref{fewoutside48}, as well as that of Theorem \ref{thmlnzdigit}.

\end{proof}

\section{Proof of Theorem \ref{thmJS}}\label{secJS}

\subsection{Revisiting the collisions-paper}
In this section, we indicate how to obtain a complete proof of Theorem~\ref{thmJS}.
This is accomplished by minimalistic changes of the paper~\cite{S}, which we indicate in the sequel.

First, we want to find \emph{almost-collisions} in a given residue class $a+2^k3^\ell\mathbb N$, replacing the statement ``$\digitsum_2(n)-\digitsum_3(n)\in\{0,1\}$ for infinitely many $n\in9+12\mathbb N$''.
For this, we modify the definition of $\eta$ (\cite[Equation~(12)]{S}) by choosing $m=\max\{2,k\}$, and $\eta\coloneqq2^m\bigl\lfloor \eta_02^{-m}\bigr\rfloor$.


In~\cite[Proposition~2.2]{S},
the definition of the shifts $d_j$ has to be changed in order to ensure that we do not leave our prescribed residue class modulo $2^k3^\ell$.
For this, let
\[d_j\coloneqq \bigl(1^{(j+1+J)\eta}\tO^\ell\bigr)_3,\]
which is obviously divisible by $3^\ell$.
Moreover, the fact
\[
2^m\mid\sum_{0\leq j<R\cdot2^m}3^j
\]
(use a geometric series and Euler--Fermat)
implies $2^k\mid d_j$.
Moreover, we ask for the existence of $L\in\{0,\ldots,2^\nu3^\beta-1\}$ such that $L\equiv a\bmod 2^k3^\ell\mathbb N$, instead of $L\equiv 9\bmod 12$.
The rest of the statement is unchanged.
In the proof, ``blocks of $\tL$s of length $\eta$'' appear.
Their length is a multiple of four also in the present paper, and the three addition patterns depicted below~\cite[Equation~(42)]{S} can be reused in order to obtain an ``almost-collision'' in the sense $\digitsum_2(n)-\digitsum_3(n)\in\{0,1\}$.

In~\cite[Equation~(35)]{S}, we replace the first line by a congruence $a\equiv r\bmod 2^m$, where $r$ is arbitrary.
Also, the construction of the integers $k_j$, beginning with~\cite[Equation~(43)]{S}, has to account for an arbitrary residue class modulo $3^\ell$.
This flexibility is needed for obtaining collisions in \emph{any} residue class modulo $2^k3^\ell$:
by intersection of residue classes such as in~\cite[Equation~(41)]{S}, and the Chinese remainder theorem, we can obtain any prescribed $L$.

Reusing~\cite[Propositions~2.3 and~2.4]{S}, we obtain the following statement.

\begin{proposition}\label{prp_almost_collisions}
Let $k,\ell\ge0$ be integers, and $0\leq L<2^k3^\ell$. There exist infinitely many positive integers $n\in L+2^k3^\ell\mathbb N$ such that
\[\digitsum_2(n)-\digitsum_3(n)\in\{0,1\}.\]
is bounded below by $C$.
\end{proposition}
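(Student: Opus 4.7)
The plan is to adapt the construction from the collisions paper~\cite{S} so that the integers produced lie in any prescribed residue class modulo $2^k3^\l$, at the price of only asking for $\digitsum_2(n)-\digitsum_3(n)\in\{0,1\}$ rather than equality. The scheme of~\cite{S} builds candidate integers as sums of suitably structured base-$3$ shifts; I plan to retain the global architecture of the proof, restrict attention to shifts that are themselves divisible by $2^k3^\l$, and pick the base integer with the correct residue via the Chinese remainder theorem.

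Concretely, first take $m=\max\{2,k\}$ and replace the block-length parameter $\eta$ from~\cite[Eq.~(12)]{S} by the largest multiple of $2^m$ not exceeding it; since $\eta$ only enters through lower-bound inequalities, this adjustment costs nothing asymptotically. Next, redefine the shifts as $d_j\coloneqq\bigl(1^{(j+1+J)\eta}\tO^\l\bigr)_3$, where the tail of $\l$ zeros in base $3$ gives divisibility by $3^\l$ for free, and $2^k$-divisibility follows from the arithmetic identity $2^m\mid\sum_{0\le j<R\cdot 2^m}3^j$, an immediate consequence of a geometric-series computation together with the fact that $3$ has order dividing $2^m$ modulo $2^m$ (Euler--Fermat).

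Having guaranteed $2^k3^\l\mid d_j$, any sum $L+\sum d_j$ inherits the residue class of $L$. The residue constraint in~\cite[Eq.~(35)]{S} is accordingly relaxed to $a\equiv r\pmod{2^m}$ for arbitrary $r$, and the construction of the integers $k_j$ starting at~\cite[Eq.~(43)]{S} is generalised so that it can realise an arbitrary residue modulo $3^\l$. Intersecting the residue classes as in~\cite[Eq.~(41)]{S} and applying the Chinese remainder theorem then pins the base integer $L$ down modulo $2^k3^\l$ to any prescribed value. The three carry patterns depicted below~\cite[Eq.~(42)]{S} apply verbatim, since their block lengths remain multiples of four, and they deliver the desired bound $\digitsum_2(n)-\digitsum_3(n)\in\{0,1\}$.

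The main obstacle, and the part requiring careful bookkeeping, is verifying that the combinatorial estimates of~\cite[Propositions~2.3 and~2.4]{S} still produce infinitely many solutions after these extra constraints are imposed. Fixing a residue class modulo $2^k3^\l$ reduces the count of admissible configurations by a factor depending only on $k$ and $\l$; since those estimates are exponential in the size of the construction, the loss is absorbed and infinitely many almost-collisions in the prescribed residue class remain, which yields the proposition.
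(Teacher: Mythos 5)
Your proposal is correct and follows essentially the same route as the paper: the same choice $m=\max\{2,k\}$, the same rounding of $\eta$ to a multiple of $2^m$, the same redefinition $d_j=\bigl(1^{(j+1+J)\eta}\tO^\ell\bigr)_3$ with the geometric-series/Euler--Fermat argument for $2^k\mid d_j$, the same relaxation of the residue constraints combined with the Chinese remainder theorem, and the same reuse of the carry patterns and of Propositions~2.3 and~2.4 of~\cite{S}. No substantive differences to report.
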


\subsection{Removal of almost collisions}
In the previous subsection we have shown that we can achieve almost collisions in any residue class $L \bmod 2^k3^{\ell}$. We now want to remove the inelegant deviation by $1$ from a true collision, while staying in the residue class. The following proposition states that we can move to a larger residue class $L' \bmod 2^{k'}3^{\ell'}$ to change an almost collision into a collision in the smaller residue class, hence proving Theorem \ref{thmJS}. 
\begin{proposition}\label{Almost}
    Let $2\leq p<q$ be two coprime integers. Let $a_0$ be a non-negative integer, say it is smaller than $p^{y_{p_0}}q^{y_{q_0}}$, for some integers $p_0$ and $q_0$. \\
    Let $d_i, m_i, M_i$ each be a collection of $\ell$ integers. Then the following 2 statements are equivalent:
    \begin{enumerate}
        \item each $d_i$ is divisible by $\gcd(p-1,q-1)$;
        \item There exist integers $t_1,\dots,t_{\ell}, y_p>y_{p_0}, y_q>y_{q_0}$ and $a$ which satisfy:
        \begin{itemize}
            \item $t_i\equiv m_i \bmod M_i$ for all $1\leq i \leq \ell$
            \item $a\equiv a_0 \bmod p^{y_{p_0}}q^{y_{q_0}}$
        \end{itemize}
        and for all $n \equiv a \bmod p^{y_p}q^{y_q}$ and for all $1\leq i \leq \ell$ the following holds:
	\begin{equation}\label{egn_shift}
	    \digitsum_p(n+t_i)-\digitsum_q(n+t_i)= \digitsum_p(n)-\digitsum_q(n)+d_i \,.
	\end{equation}
    \end{enumerate}
\end{proposition}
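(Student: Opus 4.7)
The direction (2) $\Rightarrow$ (1) is immediate. Using the congruence $\digitsum_b(m) \equiv m \pmod{b-1}$ valid for any base $b \ge 2$, one gets $\digitsum_b(n+t_i) - \digitsum_b(n) \equiv t_i \pmod{b-1}$. Applying this with $b=p$ and $b=q$ and subtracting, the left-hand side of~(\ref{egn_shift}) is congruent to $0$ modulo $g \coloneqq \gcd(p-1, q-1)$, so $g \mid d_i$.

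For the direction (1) $\Rightarrow$ (2), the main tool is the carry identity
\[
\digitsum_b(n+t) = \digitsum_b(n) + \digitsum_b(t) - (b-1)\, c_b(n, t),
\]
where $c_b(n, t)$ is the number of carries in the base-$b$ addition of $n$ and $t$. Writing this for $b=p$ and $b=q$ and subtracting, the relation~(\ref{egn_shift}) becomes
\[
(q-1)\, c_q(n, t_i) - (p-1)\, c_p(n, t_i) = d_i - \bigl(\digitsum_p(t_i) - \digitsum_q(t_i)\bigr).
\]
Both sides are divisible by $g$ (the right-hand side by assumption~(1) together with the easy direction applied to the integer $t_i$), so this linear Diophantine equation admits a non-negative solution $(c_p^{(i)}, c_q^{(i)})$: translate any integer solution by $(k(q-1), k(p-1))$ with $k$ large.

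The plan is therefore to construct $t_i$ satisfying $t_i \equiv m_i \pmod{M_i}$ and to fix the low digits of $a$ so that for every $n \equiv a \pmod{p^{y_p} q^{y_q}}$ and every $i$, the carry counts in bases $p$ and $q$ equal the prescribed constants $c_p^{(i)}, c_q^{(i)}$. Write $t_i = m_i + M_i u_i$ and choose $u_i$ so that $t_i$ has, in each of the two bases, a single short block of non-zero digits lying well above position $y_{p_0}$ (resp.~$y_{q_0}$), with the blocks corresponding to distinct indices $i$ occupying pairwise disjoint windows in both bases. Then the carry chain generated by adding $t_i$ acts on disjoint blocks of digits of $n$, so we can design $a \bmod p^{y_p}$ independently in each window, placing a block of $p-1$'s of appropriate length terminated by a smaller digit so that exactly $c_p^{(i)}$ carries occur; analogously for $a \bmod q^{y_q}$. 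Since $p$ and $q$ are coprime, the Chinese Remainder Theorem merges the two resulting constraints into a single value $a \bmod p^{y_p} q^{y_q}$; taking the digits of $a$ below positions $y_{p_0}, y_{q_0}$ to agree with those of $a_0$, and choosing $y_p, y_q$ large enough that every carry chain terminates well before the cutoff, completes the construction.

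The main obstacle is to coordinate the base-$p$ and base-$q$ cascades so that the carry counts come out \emph{exactly} equal to $c_p^{(i)}$ and $c_q^{(i)}$ -- no fewer and no more -- uniformly over $n$, while simultaneously respecting both $t_i \equiv m_i \pmod{M_i}$ and $a \equiv a_0 \pmod{p^{y_{p_0}} q^{y_{q_0}}}$. The careful placement of the windows and the choice of digit patterns should proceed with bookkeeping similar in spirit to~\cite{S}, and leans crucially on the coprimality of $p$ and $q$ via the Chinese Remainder Theorem.
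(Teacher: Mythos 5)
Your easy direction and your overall mechanism (the carry identity $\digitsum_b(n+t)=\digitsum_b(n)+\digitsum_b(t)-(b-1)c_b(n,t)$, a Bezout step to prescribe the two carry counts, a run of digits $p-1$ in $n$ to force a carry cascade of controlled length, and CRT to merge the base-$p$ and base-$q$ constraints) are exactly the ingredients of the paper's proof. However, two points in your realization are genuinely problematic. First, you require each $t_i$ to have ``a single short block of non-zero digits lying well above position $y_{p_0}$ (resp.\ $y_{q_0}$)'' \emph{in both bases simultaneously}. A single block starting at position $k$ in base $p$ and at position $m$ in base $q$ forces $p^kq^m\mid t_i$, hence $t_i\ge p^kq^m$; combined with $t_i<p^{k+j}$ this gives $p^j>q^m$, so the base-$p$ block has length $j>m\log q/\log p$. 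Since your windows must climb higher and higher as $i$ grows, these blocks cannot stay short, and the premise of your window bookkeeping fails. Second, the step you yourself flag as ``the main obstacle'' --- arranging the digits of $a$ so that \emph{exactly} $c_p^{(i)}$ and $c_q^{(i)}$ carries occur, uniformly over all $n$ in the residue class and despite the uncontrolled low digits of $t_i$ --- is precisely the content that needs proof; naming the gadget is not the same as verifying it produces the exact count.

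The paper resolves both issues at once with its Lemma on carry propagation: it imposes \emph{no} digit structure on $t_i$ beyond knowing its leading base-$p$ digit $c$ at position $\tilde y$, places the pattern $p-c,\,p-1,\dots,p-1,\,c-1$ in the digits of $n$ at positions $\tilde y,\dots,\tilde y+z$ (so the cascade length is exactly $z$ by inspection of the addition table), and absorbs the uncontrollable quantity $\digitsum_p(t_i+a_{i-1})-\digitsum_q(t_i+a_{i-1})$ directly into the Bezout equation rather than trying to control the carries inside $t_i$'s own digit range. The shifts are then treated one at a time recursively: $t_i$ is chosen larger than the modulus $p^{y_{p_{i-1}}}q^{y_{q_{i-1}}}$ already fixed, so each new forcing pattern automatically sits above all previously fixed digits; this replaces your simultaneous disjoint-windows construction and is where your sketch would need to be rebuilt.
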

\noindent
Before we can prove this proposition, we need the following lemma.
\begin{lemma} \label{trick}
	Let $c$ be the highest nonzero coefficient in the $p$-adic expansion of a positive integer $m$, say it is the coefficient of $p^y$. Further, assume that $c\neq p-1$. If we add a positive integer $n$ satisfying $n\equiv cp^{y+z}-cp^y + r\bmod p^{y+z+1}$, for some integers $z>0, 0\leq r < p^y$, then the following equation holds:
	$$
	\digitsum_p(n+m)=\digitsum_p(m+r)+\digitsum_p(n-r)- z(p-1)\,.
	$$
    In the case $c=p-1$, we would get that
    $$
	\digitsum_p(n+m)=\digitsum_p(m+r)+\digitsum_p(n-r)- (z-1)(p-1)\,.
	$$
\end{lemma}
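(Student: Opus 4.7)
The plan is a direct base-$p$ digit calculation. First I would decode the congruence using the identity $cp^{y+z}-cp^y = cp^y(p^z-1)$ to read off the digits of $n$ in positions $0$ through $y+z$: positions $y, y+1, \ldots, y+z-1, y+z$ carry digits $p-c,\, p-1,\, \ldots,\, p-1,\, c-1$ respectively, while positions $0$ through $y-1$ simply display the digits of $r$ (which uses $0\leq r<p^y$). Above position $y+z$, call the unspecified high part $n_{\mathrm{high}}\,p^{y+z+1}$. Next, write $m = cp^y + m'$ with $m' < p^y$ (valid by the hypothesis on the top digit of $m$), and split $r+m' = ap^y + b$ with $a\in\{0,1\}$ and $0\leq b<p^y$.

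Then I would add $n$ and $m$ column-by-column in base $p$. Below position $y$ the sum contributes $b$ together with a carry $a$ into position $y$. At position $y$ we have $(p-c)+c+a = p+a$, producing digit $a$ and a carry of $1$. If $c\neq p-1$ this carry flips each $p-1$ in positions $y+1,\ldots,y+z-1$ to $0$ and is absorbed at position $y+z$, where $(c-1)+1=c$; if $c=p-1$ the same flipping happens but position $y+z$ becomes $(p-2)+1 = p-1$. From the digit picture for $n-r$ (positions $y$ through $y+z$ exactly as described above, lower positions zero) one reads off $\digitsum_p(n-r) = \digitsum_p(n_{\mathrm{high}}) + z(p-1)$ uniformly in $c$. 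For $\digitsum_p(m+r) = \digitsum_p(cp^y + ap^y + b)$ one obtains $c+\digitsum_p(b)$ if $a=0$, and $(c+1)+\digitsum_p(b)$ if $a=1$, whenever $c\neq p-1$; when $c=p-1$ with $a=1$ the sum at position $y$ collapses to $p^{y+1}+b$, giving $1+\digitsum_p(b)$.

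Combining these produces $\digitsum_p(n+m) = \digitsum_p(n_{\mathrm{high}}) + c + a + \digitsum_p(b)$ for $c\neq p-1$, and $\digitsum_p(n+m) = \digitsum_p(n_{\mathrm{high}}) + (p-1) + a + \digitsum_p(b)$ when $c=p-1$, whereupon matching against the formulas for $\digitsum_p(m+r)$ and $\digitsum_p(n-r)$ yields the two stated identities. The main obstacle is not conceptual but bookkeeping: one must distinguish whether $r+m' \geq p^y$ and whether $c$ saturates the top digit, because it is precisely the interaction of these two configurations that shifts the correction between $z(p-1)$ and $(z-1)(p-1)$, and one has to check that the propagating carry does not travel past position $y+z$ in either regime.
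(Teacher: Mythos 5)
Your proposal is correct in substance and follows essentially the same route as the paper: both decode the congruence into the digit pattern $p-c,\ p-1,\dots,p-1,\ c-1$ of $n$ in positions $y$ through $y+z$, write $m=cp^y+m'$, and perform the column-by-column base-$p$ addition, reading off $s_p(n-r)=s_p(n_{\mathrm{high}})+z(p-1)$ from the same picture. Your version is in fact slightly more careful than the paper's, because you track the carry $a=\lfloor (m'+r)/p^y\rfloor$ explicitly rather than just writing ``$m'+r$'' across the last two columns. One remark: if you push your own bookkeeping through the sub-case $c=p-1$ and $a=0$ (no carry out of the low block), you get $s_p(m+r)=(p-1)+s_p(b)$ and $s_p(n+m)=s_p(n_{\mathrm{high}})+(p-1)+s_p(b)$, which matches the \emph{first} displayed identity with correction $z(p-1)$, not the second one with $(z-1)(p-1)$; so the statement's $c=p-1$ formula actually requires $m'+r\ge p^y$ (e.g.\ $p=3$, $m=6$, $r=0$, $z=1$, $n=12$ gives $s_3(18)=2=2+2-1\cdot 2$, not $2+2-0$). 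This is a wrinkle in the lemma as stated — the paper's own proof asserts the $s_p(m+r)+p-1$ contribution for $c=p-1$ without noting the needed carry — so it is not a defect you introduced, but your closing sentence that the matching ``yields the two stated identities'' should be amended to record which identity holds in which of the four $(c,a)$ configurations.
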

\begin{proof}
    This lemma becomes clear once we write out the numbers in question in base $p$. 
    We consider the addition of the following two numbers:
    $$\begin{array}{rccccccc}
         m:\;&0&0&0& \cdots&0&c&m' \\
         n:\;&n'&\mathrel{{c}{-}{1}}&\mathrel{{p}{-}{1}}&\cdots&\mathrel{{p}{-}{1}}&\mathrel{{p}{-}{c}}&r\\
         \hline
         n+m:\;&n'& c & 0 & \cdots & 0 & \multicolumn{2}{c}{m'+r}
    \end{array}
    $$
    Hence we see that the last $y+z+1$ digits contribute $\digitsum_p(m+r)$ in the case that $c\neq p-1$ and contribute $\digitsum_p(m+r)+p-1$ in the case that $c= p-1$, while the remaining digits contribute $\digitsum_p(n')=\digitsum_p(n-r-cp^{y+z}+cp^y)=\digitsum_p(n-r)-z(p-1)$.
\end{proof}
\noindent
Now we will be able to prove Proposition \ref{Almost}.
\begin{proof}
    First, we will show that $(2)$ implies $(1)$. It is known that $\digitsum_p(n) \equiv n \bmod p-1$, and $\digitsum_q(n) \equiv n \bmod q-1$. Hence evaluating equation \eqref{egn_shift} modulo $\gcd(p-1,q-1)$ we get:
    \begin{equation*}
        n+t_i-(n+t_i)\equiv n-n + d_i \;.
    \end{equation*}
    Hence $d_i \equiv 0 \bmod \gcd(p-1,q-1)$.
    
    Now we will prove that $(1)$ implies $(2)$.
    We will give an recursive algorithm that gives us $t_1$ up to $t_{\ell}$, and the corresponding residue class $a$ given any $a_0$.
    Without loss of generality, we can assume that $a_0<p^{y_{p_0}}q^{y_{q_0}}$. Pick $t_1>p^{y_{p_0}+1}q^{y_{q_0}+1}$ such that $t_1\equiv m_1 \bmod M_1$. By Bezout's lemma we can find $z'_{p_1}$ and $z'_{q_1}$ integers such that:
	\begin{equation*}
		-d_{1}+\digitsum_p(t_{1}+a_{0})-\digitsum_q(t_{1}+a_{0})-\digitsum_p(a_{0})+\digitsum_q(a_{0})=z'_{p_1}(p-1)-z'_{q_1}(q-1)\,.
	\end{equation*}
    Let $c_{p_1}$ be the leading digit of $t_{1}$ in base $p$, say it is the coefficient of $p^{\tilde y_{p_1}}$, and let $c_{q_1}$ be the leading digit of $t_{1}$ in base $q$, say it is the coefficient of $q^{\tilde y_{q_1}}$. Additionally, let $z_{p_1}=z'_{p_1}$, if $c_{p_1}\neq p-1$, and $z_{p_i}=z'_{p_1}+1$, if $c_{p_1}=p-1$. Define $z_{q_1}$ analogously.\\
	Now pick $n$ such that it satisfies:
	\begin{align}
		n &\equiv c_{p_1}p^{\tilde y_{p_1}+z_{p_1}}-c_{p_1}p^{\tilde y_{p_1}} + a_{0} \bmod p^{\tilde y_{p_1} + z_{p_1}+1}\\
		n &\equiv c_{q_1}q^{\tilde y_{q_1}+z_{q_1}}-c_{q_1}q^{\tilde y_{q_1}} + a_{0} \bmod q^{\tilde y_{q_1} + z_{q_1}+1}
	\end{align} 
	Let the residue class of $n$ $\bmod$ $p^{\tilde y_{p_1} + z_{p_1}+1}q^{\tilde y_{q_1} + z_{q_1}+1}$ be called $a_1$. Also denote \newline $y_{p_1}=\tilde y_{p_1}+z_{p_1}+1$ and $y_{q_1}=\tilde y_{q_1}+z_{q_1}+1$\\
	Using Lemma \ref{trick} twice, once with $c=c_{p_1}, y=\tilde y_{p_1}, r=a_{0}, z=z_{p_1}$ and once with $c=c_{q_1}, y=\tilde y_{q_1}, r=a_{0}, z=z_{q_1}$, we have that:
    \begin{equation}
        \begin{aligned}
		\digitsum_p(n + t_{1})&=\digitsum_p(t_{1}+a_{0})+\digitsum_p(n-a_{0})-z'_{p_1}(p-1)\\
		\digitsum_q(n + t_{1})&=\digitsum_q(t_{1}+a_{0})+\digitsum_q(n-a_{0})-z'_{q_1}(q-1)\, .
	\end{aligned}
    \end{equation}
    
	Subtracting the second line from the first, we get by the choices of $z'_{p_1}$ and $z'_{q_1}$:
    \begin{align}
		\digitsum_p(n + t_{1})-\digitsum_q(n + t_{1})&=\digitsum_p(n-a_{0})+\digitsum_p(a_{0})-\digitsum_q(n-a_{0})-\digitsum_q(a_{0})+d_{1}\\
        \digitsum_p(n + t_{1})-\digitsum_q(n + t_{1})&=\digitsum_p(n)-\digitsum_q(n)+d_{1}\, ,
	\end{align} 
    since $a_{0}$ are the last $y_p$ or $y_q$ digits of $n$, respectively. \\
    Now we will repeat this process another $\ell-1$ times, and during each iteration, we increase the indices of each variable by 1 compared to the previous iteration.
    Since in the $k$-th iteration, we have not changed the last $y_{p_{k-1}}$ or $y_{q_{k-1}}$ digits of $n$, respectively, $n$ is still in the same residue class $\bmod$ $p^{y_{p_{k-1}}}q^{y_{q_{k-1}}}$ and hence $t_1,\dots,t_{k-1}$ satisfy still their equation respectively for $n$ given in the new residue class. Taking $a$ to be $a_{\ell}$ completes the proof.
\end{proof}

\section{A general question}\label{secMoh}

By the main result of \cite{DR} and Theorem \ref{thmlnzdigit}, the base $b=12$ has the following property
\begin{property}[Full-range condition]\label{fullrange}
$\forall a \in \{1, 2, \ldots, b-1\}, \exists_{\infty} n \colon \l_b(n!) =a$.
\end{property}
Thanks to B. Sobolewski \cite{So}, we know that all bases that are prime powers also satisfy Property~\ref{fullrange}.
It would be interesting to characterize all integers $b$ satisfying the \textit{full-range condition}. \\

The following straightforward claim leads to a necessary condition for a base $b$ to satisfy Property~\ref{fullrange}.

\begin{claim}\label{cnecesall}
Let $b$ be a natural integer larger than $2$ and $b=p_1^{a_1}\cdots p_s^{a_s}$ a decomposition of $b$ as a product  of pairwise coprime prime powers. A necessary condition that at least one integer $a$ coprime with $b$ occurs  infinitely many times in the sequence $( \l_b(n!))_n$ is 
\begin{equation}\label{enec}
a_1(p_1-1) = \cdots = a_s(p_s-1).
\end{equation}
\end{claim}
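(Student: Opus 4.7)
The plan is to translate the coprimality condition $\gcd(a,b)=1$ into a rigid equality among the $p_i$-adic valuations of $n!$, and then invoke Legendre's formula to obtain a linear identity in $n$ which forces (\ref{enec}).

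First, I would analyze the structure of $\ell_b(n!)$ relative to the prime factorization of $b$. Write $n!=b^{v_b(n!)}n_b$ with $b\nmid n_b$. Since $b^v\mid n!$ is equivalent to $p_i^{a_iv}\mid n!$ for every $i$, one has $v_b(n!)=\min_i\lfloor v_{p_i}(n!)/a_i\rfloor$, and hence $v_{p_i}(n_b)=v_{p_i}(n!)-a_iv_b(n!)\ge 0$. The condition $\gcd(\ell_b(n!),b)=1$ is equivalent to $\gcd(n_b,b)=1$, i.e.\ $p_i\nmid n_b$ for every $i$; equivalently,
\[
v_{p_i}(n!)=a_i\,v_b(n!)\qquad\text{for every }i=1,\dots,s.
\]

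Next, I would invoke Legendre's formula (\ref{Legendre}), which rewrites each of these equalities as
\[
\frac{n-s_{p_i}(n)}{a_i(p_i-1)}=v_b(n!).
\]
Equating the expressions for two distinct indices $i$ and $j$ and clearing denominators yields
\[
\bigl(a_j(p_j-1)-a_i(p_i-1)\bigr)\,n \;=\; a_j(p_j-1)\,s_{p_i}(n)-a_i(p_i-1)\,s_{p_j}(n).
\]

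The right-hand side is $O(\log n)$, since $s_p(n)\le(p-1)\log_p(n+1)$, while the left-hand side grows linearly in $n$ whenever $a_j(p_j-1)\ne a_i(p_i-1)$. If some fixed $a$ coprime to $b$ appears infinitely often as $\ell_b(n!)$, then the displayed identity must hold for infinitely many $n$, which is possible only when $a_j(p_j-1)-a_i(p_i-1)=0$ for every pair $(i,j)$. This is exactly (\ref{enec}). I do not anticipate any substantial obstacle; the only delicate point is the first step, in which the elementwise coprimality condition on $\ell_b(n!)$ is upgraded to the rigid valuation identity $v_{p_i}(n!)=a_iv_b(n!)$ for all $i$, after which the argument reduces to the obvious mismatch between linear and logarithmic growth.
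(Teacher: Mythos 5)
Your proof is correct and rests on the same two ingredients as the paper's: Legendre's formula and the bound $s_p(n)=O(\log n)$, which together show that the quantities $\frac{n-s_{p_i}(n)}{a_i(p_i-1)}$ cannot agree for infinitely many $n$ unless the denominators coincide. The paper argues contrapositively (if $a_i(p_i-1)<a_j(p_j-1)$ then $p_i\mid\ell_b(n!)$ for all large $n$) while you argue directly via the linear identity, but this is only a cosmetic difference.
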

\begin{proof}
If the condition (\ref{enec}) is not satisfied, then $s \ge 2$ and  there exist two indices $i$ and $j$ such that $a_i(p_i-1)<a_j(p_j-1)$. By (\ref{Legendregen}) and the fact that $s_p(n) = O(\log n)$, we have, for sufficiently large $n$ : $v_{p^{j}}(n!) < v_{p^{i}}(n!)$ and so $p^{i}$ divides $\l_b(n!)$.
\end{proof}

By the same reasoning, we easily prove the following
\begin{claim}\label{cnecesuffall}
Let $b$ be a natural integer larger than $2$ and $b=p_1^{a_1}\cdots p_s^{a_s}$ a decomposition of $b$ as a product  of pairwise coprime prime powers. A necessary and sufficient condition that at least one integer $a$ coprime with $b$ occurs  infinitely many times in the sequence $( \l_b(n!))_n$ is 
\begin{equation}\label{enecsuf}
a_1(p_1-1) = \cdots = a_s(p_s-1) \; \text{ and }\; \exists_{\infty} n \colon s_{p_{1}}(n) = \cdots =s_{p_{s}}(n).
\end{equation}
\end{claim}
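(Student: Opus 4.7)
The plan is to extend the valuation-based reasoning that proved Claim~\ref{cnecesall}: its necessity statement~(\ref{enec}) is already in hand, and the missing piece is a digit-sum characterisation of when $\ell_b(n!)$ is coprime to $b$. I would begin by writing $n! = b^V m$ with $V = v_b(n!)$; since $\ell_b(n!) \equiv m \pmod b$ by~(\ref{valandlnzd}), coprimality of $\ell_b(n!)$ with $b$ is equivalent to $\gcd(m,b) = 1$, i.e.\ to $v_{p_i}(m) = 0$ for every $i$, i.e.\ to $v_{p_i}(n!) = a_i V$ for every $i$ (using $b^V = \prod_i p_i^{a_i V}$). Substituting Legendre's formula~(\ref{Legendre}) for each $v_{p_i}(n!)$ turns this condition into
\[
n - s_{p_i}(n) = a_i(p_i-1)\, V \qquad (1 \le i \le s).
\]

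Assuming~(\ref{enec}) with common value $A \coloneqq a_i(p_i-1)$, the display reduces to $s_{p_i}(n) = n - AV$, which is independent of $i$. Necessity of (\ref{enecsuf}) is immediate: if $\ell_b(n!)$ is coprime to $b$ for infinitely many $n$, then Claim~\ref{cnecesall} produces~(\ref{enec}), and the reduction just given shows that each such $n$ satisfies $s_{p_1}(n) = \cdots = s_{p_s}(n)$. Sufficiency is the reverse implication: given~(\ref{enec}) and an $n$ with $s_{p_1}(n) = \cdots = s_{p_s}(n) = S$, one sets $V \coloneqq (n-S)/A$ and reads the chain of equivalences backwards to obtain $\ell_b(n!)$ coprime to $b$, as long as $V$ is a non-negative integer. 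Non-negativity is automatic for large $n$ since $s_{p_i}(n) = O(\log n)$.

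The main obstacle lies in justifying the integrality of $V = (n-S)/A$. The congruence $s_{p_i}(n) \equiv n \pmod{p_i-1}$ gives $\lcm(p_1-1,\dots,p_s-1) \mid n - S$ for free, which coincides with the full divisibility by $A$ exactly when this LCM equals $A$---for instance, for $b = 12$ where both quantities are $2$, making the sufficiency clean. In the remaining cases $A$ is a proper multiple of the LCM, and one would need to refine the hypothesis by thinning the infinite set of coincidences to the residue class $n \equiv S \pmod{A}$. A pigeonhole argument on $n \bmod A$ produces an infinite subclass on which $n - S$ has constant residue modulo $A$, but hitting the residue $0$ is not formal and typically requires the extra flexibility afforded by Theorem~\ref{thmJS} (in particular its control over arbitrary residue classes modulo $2^k 3^\ell$); this is the one place where the proof truly departs from that of Claim~\ref{cnecesall}.
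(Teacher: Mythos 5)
Your reduction is the right one and is, as far as one can tell, exactly the ``same reasoning'' the paper alludes to (the paper gives no actual proof of this claim): writing $n!=b^Vm$ with $V=v_b(n!)$, coprimality of $\ell_b(n!)$ with $b$ is equivalent to $v_{p_i}(n!)=a_iV$ for all $i$, hence by Legendre's formula to $n-s_{p_i}(n)=a_i(p_i-1)V$. Your necessity argument is complete: Claim~\ref{cnecesall} forces (\ref{enec}), and then $s_{p_i}(n)=n-AV$ is independent of $i$, so every $n$ with $\ell_b(n!)$ coprime to $b$ is a coincidence of digit sums.

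The sufficiency direction is where your proof stops short, and the obstruction you name is genuine rather than a technicality you are being overly cautious about. The hypothesis (\ref{enecsuf}) only yields $\lcm(p_1-1,\ldots,p_s-1)\mid n-S$, whereas the backward chain of equivalences needs $A\mid n-S$ with $A=a_i(p_i-1)$. These two divisibilities differ already in the simplest cases: for a prime power $b=p^a$ with $a\ge2$ the coincidence condition in (\ref{enecsuf}) is vacuous while $A=a(p-1)$ strictly exceeds $p-1$ (for $b=4$ one must still produce infinitely many $n$ with $n\equiv s_2(n)\pmod 2$ --- true, but an additional fact, not a formal consequence); and for $b=12^4=2^83^4$ one has $A=8$ while $\lcm(1,2)=2$. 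So sufficiency requires coincidences $s_{p_1}(n)=\cdots=s_{p_s}(n)=S$ lying in the specific residue class $n\equiv S\pmod A$, and neither pigeonhole nor Theorem~\ref{thmJS} (which concerns only the primes $2$ and $3$) supplies this for a general base $b$. In short: your necessity proof is correct and matches the paper's intent; your sufficiency proof is incomplete at exactly the point you flag --- but the paper's own one-line justification (``by the same reasoning, we easily prove'') glosses over the same difficulty, so you have diagnosed a gap in the source rather than introduced a new one.
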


We believe that Claim \ref{cnecesuffall} is also necessary and sufficient for $b$ to satisfy the \textit{full-range condition} (Property~\ref{fullrange}).

In a letter to the Number Theory List \cite{Sun}, Zhi-Wei Sun mentions the following
\begin{conjecture}[Shawkwei Moh, 1990]\label{cmoh}
For any $k$ distinct primes $p_1,\ldots, p_k$, there are infinitely many positive integers $n$ such that
\begin{equation}\label{emoh}
v_{p_1}(n!) : v_{p_2}(n!) : \cdots : v_{p_k}(n!) = \frac{1}{p_1-1} :  \frac{1}{p_2-1} : \cdots :  \frac{1}{p_k-1} .
\end{equation}
\end{conjecture}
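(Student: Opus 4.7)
The plan is to first reformulate the conjecture in terms of digit sums. By Legendre's formula \eqref{Legendre}, one has $(p_i-1) v_{p_i}(n!) = n - s_{p_i}(n)$, so the proportion in \eqref{emoh} is equivalent to
$$s_{p_1}(n) = s_{p_2}(n) = \cdots = s_{p_k}(n).$$
Thus Conjecture \ref{cmoh} amounts to producing infinitely many integers $n$ whose digit sums in all $k$ bases agree simultaneously, which is a natural $k$-fold generalization of the collision problem treated in \cite{S}.

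For $k=2$ with $(p_1,p_2)=(2,3)$, this is the main theorem of \cite{S}, strengthened to arbitrary residue classes modulo $2^a 3^b$ by Theorem \ref{thmJS} of the present paper. A natural first step is to extend that result to every pair of primes $(p,q)$: there should be infinitely many $n$, in any prescribed residue class modulo $p^a q^b$, with $s_p(n) = s_q(n)$. The digit-block construction of \cite{S}, combined with the controlled-shift machinery of Proposition \ref{Almost} (itself proved in this paper for general coprime bases), should yield this after reworking the growth and digit-pattern estimates, though the dependence on $\log p / \log q$ requires case analysis when the bases grow.

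For general $k$, I would argue by induction. Assume that in every residue class modulo $\prod_{i<k} p_i^{e_i}$ there are infinitely many $n$ with $s_{p_1}(n)=\cdots=s_{p_{k-1}}(n)$. Given such an $n$, the goal is to find a shift $t$, lying in a suitable residue class modulo a large power of each $p_i$, such that the integer $n'=n+t$ satisfies $s_{p_k}(n')-s_{p_i}(n')=s_{p_k}(n)-s_{p_i}(n)+d_i$ for prescribed integers $d_i$ ($1\le i \le k-1$), while the equalities among $s_{p_1}(n'),\dots,s_{p_{k-1}}(n')$ are preserved. Choosing the $d_i$ so as to equate all digit sums, and using a multi-base generalization of Lemma \ref{trick} which independently controls carries in each base by selecting $t$ whose leading nonzero digit in base $p_i$ is isolated by long runs of zeros, should produce the desired $n'$.

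The main obstacle is twofold. First, extending \cite{S} to every prime pair is itself substantial: the sharp inequalities separating digit-blocks from carries in \cite{S} rely on specific features of bases $2$ and $3$ (in particular, the precise value of $\log 2 / \log 3$), and must be replaced by a uniform argument valid for arbitrary $\log p / \log q$. Second, the multi-base shift lemma must respect the congruence obstructions $d_i \equiv 0 \pmod{\gcd(p_i - 1, p_k - 1)}$ simultaneously for all $i$, and finding a single $t$ whose base-$p_i$ expansion has the required isolated-leading-digit structure in every one of the $k$ bases at once is essentially a simultaneous Diophantine condition on $t$. Securing such a $t$ in a prescribed residue class — without forcing unwanted carry cascades in some other base $p_j$ — is where I expect the argument to be most fragile, and where Claim \ref{cnecesuffall} already hints at a nontrivial arithmetic constraint on the primes involved.
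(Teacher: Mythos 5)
This statement is an open conjecture (attributed to Shawkwei Moh, 1990, via Zhi-Wei Sun's letter), and the paper offers no proof of it; on the contrary, the authors remark that they have heuristic reasons to doubt that the equivalence it would imply actually holds. Your opening reduction is correct and is exactly the paper's point of view: by Legendre's formula, \eqref{emoh} is equivalent to $s_{p_1}(n)=\cdots=s_{p_k}(n)$, which is the condition appearing in Claim~\ref{cnecesuffall}. But from there on, what you write is a research programme, not a proof, and the gaps you flag yourself are precisely the ones that make the problem open.

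Concretely: (i) even for two arbitrary primes $p<q$, the existence of infinitely many $n$ with $s_p(n)=s_q(n)$ is not known; the case $(2,3)$ in \cite{S} is a recent and delicate result whose quantitative core (the interplay of digit blocks and carry propagation) genuinely uses the specific pair of bases, so ``reworking the growth and digit-pattern estimates'' is not a routine adaptation but the heart of an unsolved problem. (ii) Your induction step asks for a shift $t$ that adjusts $s_{p_k}-s_{p_i}$ by prescribed amounts $d_i$ for all $i<k$ \emph{while preserving} the $k-1$ equalities already achieved; Proposition~\ref{Almost} and Lemma~\ref{trick} control carries in two coprime bases by prescribing $n$ in a single residue class built from both, but controlling the carry structure of $n+t$ in $k\ge3$ multiplicatively independent bases simultaneously is not something the two-base machinery extends to, and no construction in the paper or in \cite{S} does this. (The congruence obstruction $d_i\equiv0\bmod\gcd(p_i-1,p_k-1)$ is in fact automatically satisfiable, since $s_p(n)\equiv n\bmod{p-1}$, so that is not where the difficulty lies.) In short, your proposal correctly identifies the right reformulation and the right tools to try, but it does not close, and cannot currently close, either of the two genuine gaps; the statement remains a conjecture.
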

This conjecture would imply that Conditions (\ref{enec}) and (\ref{enecsuf}) are equivalent. We have however some heuristic reasons, which we shall present elsewhere, to doubt that this is the case.

\bigskip
\begin{center}
\begin{tabular}{c}
Institut de Math\'{e}matiques de Bordeaux,\\[0.5mm]
Universit\'{e} de Bordeaux, Bordeaux INP and CNRS,\\[0.5mm]
33405 Talence, France,\\[0.5mm]
jean-marc.deshouillers@math.u-bordeaux.fr,\\[0.5mm]
ORCID iD: \texttt{0000-0002-1826-689X}
\end{tabular}
\end{center}

\bigskip
\begin{center}
\begin{tabular}{c}
Montanuniversit\"at Leoben,\\[0.5mm]
Franz-Josef-Strasse 18, 8700 Leoben, Austria\\[0.5mm]
pascal.jelinek@unileoben.ac.at\\[0.5mm]
ORCID iD: \texttt{0009-0004-5877-2908}
\end{tabular}
\end{center}

\bigskip
\begin{center}
\begin{tabular}{c}
Montanuniversit\"at Leoben,\\[0.5mm]
Franz-Josef-Strasse 18, 8700 Leoben, Austria\\[0.5mm]
lukas.spiegelhofer@unileoben.ac.at\\[0.5mm]
ORCID iD: \texttt{0000-0003-3552-603X}
\end{tabular}
\end{center}


\begin{thebibliography}{}


\bibitem{D1} Deshouillers, Jean-Marc. -\textit{ A footnote to ``The least non zero digit of $n!$ in base $12$''}, Unif. Distrib. Theory, 7 (2012), 71-73.

\bibitem{DR} Deshouillers, Jean-Marc and Ruzsa, Imre. - \textit{The least nonzero digit of $n!$ in base $12$}, Publ. Math. Debrecen, 79 (2011), 163-167.

    

\bibitem{H} Hoeffding, Wassily. - \textit{Probability inequalities for sums of bounded random variables}, J. Amer. Statist. Assoc. 58 (1963), 13-30.

\bibitem{OEIS} Sloane, Neil J. A. - \textit{The On-Line Encyclopedia of Integer Sequences},  \verb"https://oeis.org".

\bibitem{So} Sobolewski, B .- \textit{On the last nonzero digits of $n!$ in a given base}, Acta Arith. 191 (2019), 173-189.

\bibitem{S} Spiegelhofer, Lukas. - \textit{Collisions of digit sums in bases $2$ and $3$}, Israel J. of Math. 258 (2023), 475-502.

\bibitem{Sun} Sun, Zhi-Wei. - \textit{A conjecture of Shawkwein Moh}, a letter sent to nmbrthry@listserv.nodak.edu on  March 11th, 2012.


\end{thebibliography}
\end{document}